\documentclass[12pt]{article}

\usepackage{cite, amsmath, amssymb}
\usepackage{color}
\newtheorem{theorem}{\bf Theorem}[section]
\newtheorem{corollary}[theorem]{\bf Corollary}

\newtheorem{conjecture}[theorem]{\bf Conjecture}

\newcommand{\proof}{\noindent{\bf Proof.\ }}
\newcommand{\qed}{\hfill $\square$ \bigskip}

\newcommand{\rad}{{\rm rad}}
\newcommand{\diam}{{\rm diam}}

\textwidth 15.0cm
\textheight 20.5cm
\oddsidemargin 0.4cm
\evensidemargin 0.4cm
\voffset -1cm

\begin{document}

\title{On the Relation Between Wiener Index and Eccentricity of a Graph}

\author{
    Hamid Darabi $^{a}$
    \and
	Yaser Alizadeh $^{b}$
	\and
	Sandi Klav\v zar $^{c,d,e}$
	\and
	Kinkar Chandra Das $^{f}$
}

\date{\today}

\maketitle
\begin{center}
$^a$ Department of Mathematics, Esfarayen University of Technology, Esfarayen, Iran \\
e-mail: {\tt darabi@esfarayen.ac.ir}\\
\medskip

$^b$ Department of Mathematics, Hakim Sabzevari University, Sabzevar, Iran \\
e-mail: {\tt y.alizadeh@hsu.ac.ir} \\
\medskip

$^c$ Faculty of Mathematics and Physics, University of Ljubljana, Slovenia\\
e-mail: {\tt sandi.klavzar@fmf.uni-lj.si} \\
\medskip

$^d$ Faculty of Natural Sciences and Mathematics, University of Maribor, Slovenia \\
\medskip

$^e$ Institute of Mathematics, Physics and Mechanics, Ljubljana, Slovenia\\
\medskip

$^f$ Department of Mathematics, Sungkyunkwan University, Suwon 16419, Republic of Korea \\
e-mail: {\tt kinkardas2003@googlemail.com} 
\end{center}

\begin{abstract}
The relation between the Wiener index $W(G)$ and the eccentricity $\varepsilon(G)$ of a graph $G$ is studied. Lower and upper bounds on $W(G)$ in terms of $\varepsilon(G)$ are proved and extremal graphs characterized. A Nordhaus-Gaddum type result on $W(G)$ involving $\varepsilon(G)$ is given. A sharp upper bound on the Wiener index of a tree in terms of its eccentricity is proved. It is shown that in the class of trees of the same order, the difference $W(T) - \varepsilon(T)$ is minimized on caterpillars. An exact formula for  $W(T) - \varepsilon(T)$ in terms of the radius of a tree $T$ is obtained. A lower bound on the eccentricity of a tree in terms of its radius is also given. Two conjectures are proposed. The first asserts that the difference $W(G) - \varepsilon(G)$ does not increase after contracting an edge of $G$. The second conjecture asserts that the difference between the Wiener index of a graph and its eccentricity is largest on paths. 
\end{abstract}

\noindent {\bf Key words:} Wiener index; eccentricity; eccentric connectivity; tree; extremal graph


\section{Introduction}
\label{sec:intro}

All graphs in this paper are simple and connected. The order and the size of a graph $G$ will be denoted by $n(G)$ and $m(G)$, respectively. If $G = (V(G), E(G))$ is a graph and $u,v\in V(G)$, then the distance $d_G(u,v)$  is the number of edges on a shortest $u,v$-path. (By a $u,v$-path in $G$ we mean a path in $G$ whose end-vertices are the vertices $u$ and $v$.) The Wiener index of a graph $G$,
$$W(G)= \sum_{\{u,v\}\subseteq V(G)} d_G(u,v)\,,$$
is the oldest graph invariant (alias {\em topological index}) studied in mathematical chemistry~\cite{wiener-1947}. It is also one of the most studied among such indices, cf.\ the surveys~\cite{dobrynin-2001, dobrynin-2002, dobrynin-2012, knor-2016}, and continues to be an active research field~\cite{alizadeh-2018, dobrynin-2018, gutman-2017, gyori-2021, iran-2019, knor-2018, pan-2018, peterin-2018, tan-2018}. The {\em total distance of a vertex} $v$ of a graph $G$ is defined as $d_G(v)= \sum _{u \in V(G)}d_G(v,u)$.

If $v$ is a vertex of a graph $G$, then the {\em eccentricity} $\varepsilon_G(v)$ of a vertex $v$ is the distance from $v$ to a farthest vertex from $v$. A vertex $u$ is said to be an {\em eccentric vertex of $v$} if $d_G(v,u)=\varepsilon_G(v)$. The {\em radius} $\rad(G)$ of $G$ and the {\em diameter} $\diam(G)$ of $G$ are the minimum and the maximum eccentricity, respectively. The {\em center} $C(G)$ of $G$ is the set of vertices with minimum eccentricity, that is, $C(G) = \{u\in V(G):\ \varepsilon_G(u) = \rad(G)\}$. The {\em eccentricity} of a graph $G$ is
$$\varepsilon(G) = \sum_{v \in V(G)} \varepsilon_G(v)\,.$$
The eccentricity of a graph has been earlier studied on graph operations in~\cite{de-2015, fath-2014}, where the invariant was named {\em total eccentricity of a graph} but we believe that ``eccentricity of a graph" suffices because (i) this term is not used elsewhere and (ii) this is also consistent with the notation and terminology from~\cite{he-2018, hinz-2012}. The investigations of the Wiener index are in a way equivalent with the studies of the average distance. Similarly, the studies of the eccentricity are parallel with the research of the average eccentricity, the later being studied in particular in~\cite{dankelmann-2014, dankelmann-2020, das-2017, du-2013, he-2018, hinz-2012, ilic-2012, krnc-2020, tang-2012}. In~\cite{casablanca-2018}, the Wiener index has been studied on strong product graphs along with the average eccentricity. For a wider picture on eccentricity based descriptors for QSAR/QSPR we refer to~\cite{madan-2010}.

In this paper we are interested in the difference between the Wiener index and the eccentricity of a graph. In the next section we first give two lower bounds on $W(G)$ in terms of $\varepsilon(G)$ and then prove two related upper bounds. In all the cases we characterize the graphs that attain the bounds. In the last result of the section we prove a Nordhaus-Gaddum type result on the Wiener index of a graph involving its eccentricity. In Section~\ref{sec:trees} we concentrate on trees. First a sharp upper bound on the Wiener index of a tree in terms of its eccentricity is given. Then we
prove that the difference $W(T) - \varepsilon(T)$ on trees $T$ of the same order is minimized on caterpillars. We next give an exact formula for  $W(T) - \varepsilon(T)$ in terms of the radius of $T$. We also give a lower bound on the eccentricity of a tree in terms of the radius. (For results that relate the maximum Wiener index of trees with a given radius see~\cite{das-2017b}.) We conclude the paper with two conjectures. The first asserts that the difference $W - \varepsilon$ does not increase after contracting an edge. We support the conjecture by proving that it holds for the case when the contracted edge is a bridge. The second conjecture asserts that the difference between the Wiener index of a graph and its eccentricity is largest on paths. Before we begin with the results, some further definitions are given. 

Let $G$ be a graph. The degree of a vertex $u\in V(G)$ will be denoted with ${\rm deg}_G(u)$ or ${\rm deg}(u)$ for short. A {\em matching} of $G$ is a set of independent edges of $G$, that is, a set of edges no pair of them sharing an end-vertex. A matching $M$ is {\em perfect} if every vertex of $G$ is an end-vertex of some edge from $M$. We will use $K_n$, $P_n$, and $C_n$ to denote the complete graph of order $n$, the path of order $n$, and the cycle of order $n$, respectively. By $K_{n,m}$ we denote the complete bipartite graph with bipartition sets of order $n$ and $m$; in particular, $K_{1,m}$ is the star of order $m+1$. In the rest of the paper we may abbreviate $d_G(u,v)$, $d_G(v)$, $\varepsilon_G(v)$, and  ${\rm deg}_G(u)$ to $d(u,v)$, $d(v)$, $\varepsilon(v)$, and ${\rm deg}(u)$, respectively, when $G$ will be clear from the context.

\section{General graphs}
\label{sec:general-graphs}

\begin{theorem}
\label{thm:k1}
If $G$ is a graph, then
$$W(G)\geq \varepsilon(G)+m(G)-n(G)\,,$$
equality holding if and only if $G$ is obtained from $K_{n(G)}$ be removing a matching.
\end{theorem}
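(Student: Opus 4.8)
The plan is to derive the inequality from a single local estimate on total distances and then sum it, exploiting the identities $\sum_{v} d(v) = 2W(G)$, $\sum_v \deg(v) = 2m(G)$, and $\sum_v \varepsilon_G(v) = \varepsilon(G)$. Concretely, I would first prove that every vertex $v$ satisfies $d(v) \ge \deg(v) + 2(\varepsilon_G(v) - 1)$. To see this, note that the $\deg(v)$ neighbours of $v$ contribute exactly $\deg(v)$ to $d(v)$, while a shortest path $v = x_0, x_1, \dots, x_{\varepsilon_G(v)} = w$ to an eccentric vertex $w$ supplies vertices $x_2, \dots, x_{\varepsilon_G(v)}$ at distances $2, 3, \dots, \varepsilon_G(v)$ from $v$. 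These are pairwise distinct non-neighbours, so, discarding every remaining vertex, $d(v) \ge \deg(v) + \sum_{i=2}^{\varepsilon_G(v)} i$. Since $\sum_{i=2}^{k} i - 2(k-1) = \tfrac{1}{2}(k-1)(k-2) \ge 0$, the claimed local bound follows.

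Summing the local bound over all $v \in V(G)$ and inserting the three identities above turns $\sum_v d(v) \ge \sum_v \deg(v) + 2\sum_v(\varepsilon_G(v)-1)$ into $2W(G) \ge 2m(G) + 2\varepsilon(G) - 2n(G)$, which is exactly $W(G) \ge \varepsilon(G) + m(G) - n(G)$. This part is routine once the local estimate is in place.

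The more delicate half is the equality characterization, which is where I expect the real work to be, since I must keep track of the two independent sources of slack in the local bound. Equality in $\sum_{i=2}^{k} i \ge 2(k-1)$ forces $\varepsilon_G(v) \le 2$, and equality in $d(v) \ge \deg(v) + \sum_{i=2}^{\varepsilon_G(v)} i$ forces every non-neighbour of $v$ to be one of the path vertices $x_2, \dots, x_{\varepsilon_G(v)}$, so that $v$ has exactly $\varepsilon_G(v) - 1$ non-neighbours, one at each distance level. Combining these, equality in the theorem holds if and only if every vertex has at most one non-neighbour (one when $\varepsilon_G(v) = 2$, none when $\varepsilon_G(v) = 1$). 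This says precisely that the complement $\overline{G}$ has maximum degree at most $1$, i.e. that $\overline{G}$ is a matching, which is the same as $G$ being $K_{n(G)}$ with a matching removed. The converse direction I would settle by a direct computation: if $G = K_{n} - M$ with $M$ a matching of $k$ edges, then the $2k$ matched vertices have eccentricity $2$ and the remaining $n - 2k$ vertices have eccentricity $1$, so $\varepsilon(G) = n + 2k$, while $m(G) = \binom{n}{2} - k$ and $W(G) = \binom{n}{2} + k$, and these values satisfy the bound with equality.
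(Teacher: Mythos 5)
Your proof is correct, and it takes a genuinely different route from the paper's. The paper argues by cases on the number $k$ of universal vertices: for $k\ge 1$ it combines three separate ingredients, namely the bound $2W(G)\ge k(n-1)+(n-k)n=n^2-k$, the exact count $\varepsilon(G)=2n-k$ (valid only because a universal vertex forces every other eccentricity to equal $2$), and the handshake inequality $2m\le n^2-2n+k$; for $k=0$ it uses the per-vertex bound $d(v)\ge \varepsilon(v)+(\varepsilon(v)-1)+(n-3)$ together with $2m\le n(n-2)$. Your single local estimate $d(v)\ge \deg(v)+2(\varepsilon_G(v)-1)$, obtained by counting the neighbours of $v$ plus the vertices $x_2,\dots,x_{\varepsilon_G(v)}$ along a shortest path to an eccentric vertex and using $\sum_{i=2}^{k}i-2(k-1)=\tfrac12(k-1)(k-2)\ge 0$, replaces all of this: it holds uniformly, sums directly to $2W(G)\ge 2m(G)+2\varepsilon(G)-2n(G)$ via the three standard identities, and it localizes the equality analysis, since tightness at $v$ forces $\varepsilon_G(v)\le 2$ and exactly $\varepsilon_G(v)-1$ non-neighbours, whence the complement has maximum degree at most $1$ and $G$ is $K_{n}$ minus a matching; your closing computation of $W$, $\varepsilon$ and $m$ for $K_n-M$ cleanly settles the converse, which in the paper is folded into the equality discussion of inequalities \eqref{eq:wiener-lower} and \eqref{eq:Handshaking}. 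What the paper's route buys is explicit intermediate formulas in the presence of universal vertices; what yours buys is brevity, no case distinction, and an equality characterization that falls out of the local slack terms instead of being re-argued separately in each case. (Both proofs share the same harmless blind spot at the trivial graph $n=1$, where the stated equality case degenerates, so this is no defect of yours relative to the paper.)
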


\proof
Set $n = n(G)$, $m=m(G)$, and let $S$ be the set of universal vertices of $G$, that is, $S = \{u\in V(G):\ {\rm deg}(u) = n-1\}$. Set further $k = |S|$.

\medskip\noindent
{\bf Case 1.} $k\ge 1$. \\
From the definition of the Wiener index we get that
\begin{equation}
\label{eq:wiener-lower}
2W(G) = \sum\limits_{v\in V(G)}\,d(v)\geq k(n-1)+(n-k)n=n^2-k\,.
\end{equation}
Moreover, equality holds in~\eqref{eq:wiener-lower} if and only if $\deg(v)=n-2$ for every vertex $v\in V(G)\backslash S$. This in turn holds if and only if $G$ is obtained from $K_{n}$ be removing a matching. (An easy way to verify this fact is to consider the complement of $G$.)

From the definition of the total eccentricity we infer that
\begin{equation}
\label{eq:zeta-equality}
\varepsilon(G) = \sum\limits_{v\in V(G)}\,\varepsilon(v)=k+(n-k)2=2n-k\,.
\end{equation}
Moreover, the Handshaking Lemma yields
\begin{equation}
\label{eq:Handshaking}
2m = \sum\limits_{v\in V(G)}\,\deg(v)\leq k(n-1)+(n-k)(n-2)=n^2-2n+k\,.
\end{equation}
Again, the equality in~\eqref{eq:Handshaking} holds if and only if $\deg(v)=n-2$ for every $v\in V(G)\backslash S$, that is, if and only if $G$ is obtained from $K_{n}$ be removing a matching.

From the above (in)equalities we get:
\begin{eqnarray*}
2W(G) & \stackrel{\eqref{eq:wiener-lower}}{\ge}  & n^2-k
\stackrel{\eqref{eq:Handshaking}}\ge (2m + 2n - k) - k  = 2m + 2n - 2k \\
& \stackrel{\eqref{eq:zeta-equality}} = & 2m + 2n - 2(2n - \varepsilon(G))  = 2\varepsilon(G) + 2m - 2n\,.
\end{eqnarray*}

\medskip\noindent
{\bf Case 2.} $k = 0$. \\
In this case $\varepsilon(v)\geq 2$ holds for every $v\in V(G)$ and consequently
\begin{eqnarray}
2W(G) = \sum\limits_{v\in V(G)}\,d(v) & \geq  & \sum\limits_{v\in V(G)}\,\Big(\varepsilon(v)+(\varepsilon(v)-1)+(n-3)\Big) \nonumber \\
& = & 2\varepsilon(G)+n(n-4)\,. \label{eq:k=0}
\end{eqnarray}
As $k=0$, the Handshaking Lemma implies $n(n-2)\ge 2m$ and thus $n(n-4)\ge 2m-2n$. Combining this fact with~\eqref{eq:k=0} we infer that $2W(G) \ge 2\varepsilon(G)+2m-2n$ holds also in this case. Moreover, the equality holds if and only if $d(v)=2\varepsilon(v)+n-4$ for every $v\in V(G)$ and $2m=n(n-2)$, that is, if and only if $G$ is the graph obtained from $K_n$ by removing a perfect matching.
\qed

With the help of Theorem~\ref{thm:k1} we can deduce the following result independent from the order and size of a graph considered.

\begin{theorem}
\label{thm:W-at-least-zeta}
If $G$ is a graph with $n(G)\ge 4$, then $W(G)\geq \varepsilon(G)$. Moreover, equality holds if and only if $G\in \{P_4, C_4\}$.
\end{theorem}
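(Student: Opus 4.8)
The plan is to build on Theorem~\ref{thm:k1}, which asserts $W(G) \ge \varepsilon(G) + m(G) - n(G)$, and to split the argument by the size of $G$. Writing $n = n(G)$ and $m = m(G)$, connectedness gives $m \ge n-1$, so the bound $W(G) - \varepsilon(G) \ge m - n$ is already conclusive unless $m = n-1$, i.e.\ unless $G$ is a tree. I would therefore first dispatch the case $m \ge n$ using Theorem~\ref{thm:k1} directly, and then handle trees by a separate, self-contained estimate.

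For $m \ge n$, Theorem~\ref{thm:k1} immediately gives $W(G) \ge \varepsilon(G) + (m-n) \ge \varepsilon(G)$. If $m > n$ the inequality is strict, so no extremal graph occurs. If $m = n$, then $W(G) = \varepsilon(G)$ forces equality in Theorem~\ref{thm:k1}, so $G$ is obtained from $K_n$ by deleting a matching of some $t$ edges. Combining $m = \binom{n}{2} - t = n$ with the matching constraint $t \le \lfloor n/2 \rfloor$ pins down $n$: solving $t = \tfrac{n(n-3)}{2} \le \tfrac{n}{2}$ yields $n \le 4$, hence $n = 4$ and $t = 2$, that is, $G = C_4$. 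A direct check (or the equality clause of Theorem~\ref{thm:k1}) confirms $W(C_4) = \varepsilon(C_4) = 8$.

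The heart of the matter, and the step I expect to be the main obstacle, is the tree case $m = n-1$, where Theorem~\ref{thm:k1} only gives the useless $W(T) \ge \varepsilon(T) - 1$. Here I would instead prove the sharper \emph{vertexwise} estimate $d(v) \ge 2\varepsilon(v)$ for every $v \in V(T)$, with $T$ a tree on $n \ge 4$ vertices. Fix $v$, set $e = \varepsilon(v)$, and partition $V(T)$ into distance layers from $v$; since a shortest path from $v$ to an eccentric vertex meets one vertex at each of the distances $1,2,\ldots,e$, these layers are nonempty, while the remaining $n-1-e$ vertices all lie at distance at least $1$. This gives
$$ d(v) \ge \big(1 + 2 + \cdots + e\big) + (n-1-e) = \frac{e(e+1)}{2} + (n-1-e)\,, $$
and a short computation reduces to $d(v) - 2e \ge \frac{e(e-5)}{2} + (n-1) \ge (n-1) - 3 = n-4 \ge 0$, the minimum of the quadratic being attained at $e \in \{2,3\}$. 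Summing over all $v$ and using $2W(T) = \sum_v d(v)$ and $\varepsilon(T) = \sum_v \varepsilon(v)$ then yields $W(T) \ge \varepsilon(T)$.

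For the equality analysis among trees I would exploit the slack in that estimate: since $d(v) - 2\varepsilon(v) \ge n-4$ holds for \emph{every} $v$, we get $2\big(W(T) - \varepsilon(T)\big) = \sum_v \big(d(v) - 2\varepsilon(v)\big) \ge n(n-4) > 0$ whenever $n \ge 5$, ruling out equality there. Hence any extremal tree has $n = 4$, and inspecting the only two trees on four vertices, namely $P_4$ (with $W = \varepsilon = 10$) and $K_{1,3}$ (with $W = 9 > 7 = \varepsilon$), isolates $P_4$ as the unique extremal tree. Assembling the two cases proves $W(G) \ge \varepsilon(G)$ for all $G$ with $n(G) \ge 4$, with equality precisely for $G \in \{P_4, C_4\}$.
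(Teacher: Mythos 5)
Your proposal is correct and follows essentially the same route as the paper: the case split by $m(G)$ versus $n(G)$ with Theorem~\ref{thm:k1} handling $m \ge n$, and for trees the same distance-layer bound $d(v)\ge \frac{\varepsilon(v)(\varepsilon(v)+1)}{2}+(n-1-\varepsilon(v))$ combined with the same quadratic minimization at $\varepsilon(v)\in\{2,3\}$. Your write-up is in places slightly more explicit than the paper's (the computation $t=\frac{n(n-3)}{2}\le\lfloor n/2\rfloor$ forcing $n\le 4$ in the $m=n$ case, and the slack argument $2(W(T)-\varepsilon(T))\ge n(n-4)$ plus enumeration of the two $4$-vertex trees for the equality case), but these are refinements of the identical strategy rather than a different proof.
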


\proof
Again set $n = n(G)$ and $m=m(G)$. If $m\geq n+1$, then Theorem~\ref{thm:k1} immediately yields $W(G)>\varepsilon(G)$. Suppose next that $m=n$. Then Theorem~\ref{thm:k1} gives $W(G)\geq \varepsilon(G)$ with the equality if and only if $G$ is $K_n$ minus a matching, where $m=n$. If $n\geq 5$, then this is clearly not possible. Hence the only graph with $m=n$ that attains the equality is $C_4$. Since $G$ is connected, the only remaining case to consider is $m=n-1$ which means that $G$ is a tree.

So let $G$ be a tree. If $G\cong K_{1,\,n-1}$, then $W(G)>\varepsilon(G)$ as $n\geq 4$. Assume hence in the rest that $\varepsilon(v)\geq 2$ for every $v\in V(G)$. Then
\begin{eqnarray*}
 d(v) = \sum\limits_{v\in V(G)}\,d(u,\,v) & \geq & (1+2+\cdots+\varepsilon(v))+(n-1-\varepsilon(v)) \\
& = & n-1+\frac{1}{2}\,\Big(\varepsilon(v)^2-\varepsilon(v)\Big)\,.
\end{eqnarray*}
 Since $f(x)=2(n-1)+x^2-5x$ is an increasing function on $[3,\,n-1]$, for $x\geq 2$ ($x$ is an integer) we have
    $$f(x)\geq \min\{f(2),\,f(3)\}\geq 0~~\mbox{ as }~n\geq 4.$$
 From the above results we deduce
  $$2(n-1)+\varepsilon(v)^2-5\varepsilon(v)\geq 0,\mbox{ that is, }d(v)\geq n-1+\frac{1}{2}\,\Big(\varepsilon(v)^2-\varepsilon(v)\Big)\geq 2\varepsilon(v).$$
Therefore
 $$2W(G)=\sum\limits_{v\in V(G)}\,d(v)\geq 2\sum\limits_{v\in V(G)}\,\varepsilon(v)=2\,\varepsilon(G).$$
 Moreover, the equality holds if and only if $n=4$ and $\varepsilon(v)\in \{2, 3\}$ for any $v\in V(G)$, that is, if and only if $G\cong P_4$.
\qed

Note that Theorem~\ref{thm:W-at-least-zeta} implies that if $n(G)\geq 5$, then $W(G)>\varepsilon(G)$.

In Theorems~\ref{thm:k1} and~\ref{thm:W-at-least-zeta} we have bounded $W(G)$ from below using $\varepsilon(G)$. On the other hand, in~\cite{dankelmann-2004} it was observed (in terms of the average distance and average eccentricity) that $W(G)\leq \frac{n(G)-1}{2}\,\varepsilon(G)$ holds for any graph $G$. We add here that equality holds if and only if $G\cong K_{n(G)}$. To give further upper bounds on $W(G)$ using $\varepsilon(G)$, we need to recall a couple of concepts. First, the {\em eccentric connectivity} $\xi^c (G)$ of $G$ is
$$\xi^c (G) = \sum_{v \in V(G)}\deg(v)  \varepsilon(v)\,.$$
This graph invariant has been already well investigated, see the selection of related papers~\cite{das-2015, dankelmann-morgan, dankelmann-2014, doslic-2014, gupta-2000, sharma-1997, xu-2016, zhang-2019}. Second, $G$ is a {\em self-centered graph} if $\rad(G) = \diam(G)$, cf.~\cite{buckley-1989, janakiraman-2008}.

\begin{theorem}
\label{thm:from-above}
$(i)$ If $G$ is a graph, then $2W(G)\leq (n(G)-1)\,\varepsilon(G)-\xi^c(G)+2m(G)$, equality holding if and only if  $\diam(G) \le 2$.

\noindent
$(ii)$ If $G$ is a self-centered graph, then
$$W(G) \leq \varepsilon(G) + \left \{
\begin{array}{ll}
  \frac{n(n-2)^2}{8}; & n\ {\rm even}\,, \\[3mm]
  \frac{n[(n-2)^2-1]}{8}; & n\ {\rm odd}\,.
\end{array} \right.
$$
Moreover, equality holds if and only if $G$ is an odd cycle.
\end{theorem}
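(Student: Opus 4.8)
The plan is to derive part $(i)$ from an elementary per-vertex distance estimate and then bootstrap part $(ii)$ from a sharper estimate special to self-centered graphs.

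For part $(i)$, writing $n=n(G)$, I would start from $2W(G)=\sum_{v}d(v)$ and bound each total distance by $d(v)\le \deg(v)+(n-1-\deg(v))\,\varepsilon(v)$: the $\deg(v)$ neighbours of $v$ lie at distance $1$, and each of the remaining $n-1-\deg(v)$ vertices lies at distance at most $\varepsilon(v)$. Summing over $v$ and using $\sum_v\deg(v)=2m(G)$, $\sum_v\varepsilon(v)=\varepsilon(G)$ and $\sum_v\deg(v)\varepsilon(v)=\xi^c(G)$ gives precisely $2W(G)\le (n-1)\varepsilon(G)-\xi^c(G)+2m(G)$. Equality in the $v$-th estimate means every non-neighbour of $v$ is at distance exactly $\varepsilon(v)$; this fails whenever $\varepsilon(v)\ge 3$, since then a vertex at distance $2$ is a non-neighbour at distance strictly below $\varepsilon(v)$. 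Hence the global equality holds iff $\varepsilon(v)\le 2$ for all $v$, that is, iff $\diam(G)\le 2$.

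For part $(ii)$, self-centeredness gives $\varepsilon_G(v)=r:=\rad(G)=\diam(G)$ for all $v$, so $\varepsilon(G)=nr$. The heart of the matter is the per-vertex bound $d(v)\le r(n-r)$, which I would obtain from a ball-growth estimate: in a self-centered graph at least $2t+1$ vertices lie within distance $t$ of each vertex, for $0\le t\le r-1$. Granting this, the number of vertices at distance $\ge i$ from $v$ is at most $n-(2i-1)$, so $d(v)=\sum_{i=1}^{r}\#\{u:\,d(u,v)\ge i\}\le\sum_{i=1}^{r}(n-2i+1)=r(n-r)$. Summing over $v$ yields $W(G)\le\tfrac12 nr(n-r)$ and therefore $W(G)-\varepsilon(G)\le h(r):=\tfrac12 nr(n-r-2)$. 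To establish the ball-growth estimate, which I expect to be the main obstacle since it is the only place where self-centeredness is used essentially, I would take a smallest $t$ violating it for some $v$: minimality forces exactly one vertex $x$ at distance $t$ from $v$, so $x$ is a cut vertex separating the vertices at distance $\le t-1$ from the nonempty set $F$ of vertices at distance $>t$. Every vertex of $F$ lies within distance $r-t$ of $x$, so $\varepsilon_G(x)=r$ can only be realized by some $a$ with $d(x,a)=r$; choosing $u\in F$ with $d(x,u)=r-t$ then gives $d(a,u)=d(a,x)+d(x,u)=2r-t>r$, contradicting $\diam(G)=r$.

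It remains to maximize $h(r)=\tfrac12 n\big((n-2)r-r^2\big)$ over integers $r$ and to settle equality. As a downward parabola peaked at $r=\tfrac{n-2}{2}$, its maximum is $\tfrac{n(n-2)^2}{8}$ for even $n$ (attained at $r=\tfrac{n-2}{2}$) and $\tfrac{n[(n-2)^2-1]}{8}$ for odd $n$ (attained at $r=\tfrac{n-1}{2}$), which are exactly the two claimed bounds. For equality I would note that $d(v)=r(n-r)$ for every $v$ forces, via the case $i=2$ of the ball-growth bound, $\deg(v)=2$ for all $v$, so $G$ is a cycle. An odd cycle $C_n$ has $r=\tfrac{n-1}{2}$ and attains $\max h$, whereas an even cycle has $r=\tfrac n2$ and only reaches $h(\tfrac n2)=\tfrac{n^2(n-4)}{8}<\tfrac{n(n-2)^2}{8}$. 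Thus equality holds precisely for odd cycles.
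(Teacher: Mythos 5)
Your part (i) is exactly the paper's proof: the same per-vertex estimate $d(v)\le \deg(v)+(n-1-\deg(v))\,\varepsilon(v)$, summed over $v$, with the equality case analyzed the same way. For the inequality in part (ii) you also follow the paper's route in substance: both arguments rest on the fact that in a self-centered graph every level set at distance $t\le r-1$ from a vertex ($r=\rad(G)$) contains at least two vertices, which gives $d(v)\le r(n-r)$, and both then maximize the same quadratic $\frac{n}{2}\,r(n-r-2)$ over integers $r$. The paper gets the level bound by asserting that a self-centered graph is $2$-connected and invoking Whitney's theorem; you prove it directly, showing that a singleton level $\{x\}$ at distance $t<r$ makes $x$ a cut vertex and then producing a distance $d(a,u)=2r-t>r$, contradicting $\diam(G)=r$. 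That version is self-contained and, if anything, cleaner than the paper's sketch (add one line: a vertex $u$ with $d(x,u)=r-t$ exists because any eccentric vertex of $v$ lies beyond $x$ and satisfies $r=t+d(x,u)$).

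The genuine gap is in your equality characterization, and it sits exactly where your argument is silent. The step ``the case $i=2$ of the ball-growth bound forces $\deg(v)=2$'' presupposes $r\ge 2$; when $r=1$ the sum $\sum_{i=1}^{r}$ has no $i=2$ term. Self-centered graphs with $r=1$ are precisely the complete graphs, and they escape your argument: $K_n$ has $W(K_n)-\varepsilon(K_n)=\frac{n(n-3)}{2}$, and this equals $\frac{n(n-2)^2}{8}$ exactly when $n=4$, and equals $\frac{n[(n-2)^2-1]}{8}$ exactly when $n\in\{3,5\}$. So $K_4$ and $K_5$ attain equality yet are not odd cycles: the conclusion ``equality holds precisely for odd cycles'' is not merely unproven in your write-up, it is false as stated, and your proof cannot be completed without amending the statement (e.g.\ adding $K_4$ and $K_5$ as exceptional equality graphs, or assuming $n\ge 6$). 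In fairness, the paper fares no better here --- it dismisses the equality case with ``it is easily seen,'' and its assertion fails on the same two graphs --- and your analysis for $r\ge 2$ ($2$-regularity forces a cycle; an even cycle has $r=n/2$ and reaches only $\frac{n^2(n-4)}{8}$) is the correct core of what a true characterization would require.
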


\proof
Throughout the proof let $n = n(G)$ and $m=m(G)$.

$(i)$ If $v\in V(G)$, then
 \begin{eqnarray}
  d(v)=\sum\limits_{u\in V(G)}\,d(v,u) &\leq&\deg(v)+(n-1-\deg(v))\,\varepsilon(v)\nonumber\\[2mm]
   &=&(n-1)\,\varepsilon(v)-\deg(v)\,(\varepsilon(v)-1).\nonumber
 \end{eqnarray}
Summing over the vertices of $G$ we get
$$2W(G) \le (n-1)\,\varepsilon(G)-\xi^c(G)+2m\,.$$
Moreover, equality holds if and only if $\varepsilon(v)\in \{1,2\}$, in other words, if and only if $\diam(G)\le 2$.

$(ii)$ Note first that considering a possible cut-vertex of a self-centered graph $G$ we infer that $G$ is $2$-connected. This means that $n\ge 3$. Whitney's theorem (which characterizes $2$-connected graphs) asserts that for every distinct vertices $v$ and $u$, there exist two internally disjoint $u,v$-paths. Hence, if $v\in V(G)$, then, observing a vertex $w$ at distance $d = \diam(G) = \rad(G)$ from $v$, we infer that
 \begin{eqnarray*}
 \frac{d(v)}{2}-\varepsilon(v)&\leq&[1+2+\cdots+(d-1)]+\frac{d(n-2d+1)}{2}-d\\
 &=&\frac{d(n-d-2)}{2}.
 \end{eqnarray*}
 Since $g(x)=x(n-x-2)$ is an increasing function on $x\leq \frac{n-2}{2}$, and a decreasing function on $x\geq \frac{n-2}{2}$, we have
$$g(x)\leq\left \{ \begin{array}{ll}
  \frac{(n-2)^2}{4}; & n\ {\rm even}\,, \\[3mm]
  \frac{(n-2)^2-1}{4}; & n\ {\rm odd}\,.
\end{array} \right.$$
Hence
$$\frac{d(v)}{2}-\varepsilon(v) \leq \left \{ \begin{array}{ll}
  \frac{(n-2)^2}{8}; & n\ {\rm even}\,, \\[3mm]
  \frac{(n-2)^2-1}{8}; & n\ {\rm odd}\,.
\end{array} \right.$$
\noindent
Summing over all vertices we get
 $$W(G)-\varepsilon(G)\leq\left \{ \begin{array}{ll}
  \frac{n(n-2)^2}{8}~~~&\mbox{ when $n$ is even}, \\[5mm]
  \frac{n[(n-2)^2-1]}{8}~~~&\mbox{ when $n$ is odd.}
 \end{array} \right.$$
Moreover, it is easily seen that equality holds if and only if $G$ is an odd cycle.
\qed

As a consequence of Theorem~\ref{thm:from-above} we have the following Nordhaus-Gaddum type result (cf.~\cite{li-2011, mao-2017}). 

\begin{corollary}
If both $G$ and its complement $\Bar{G}$ are connected, then
$$ W(G) + W(\Bar{G}) \le \binom{n(G)}{2} + \frac{1}{2}\left[(n(G)-1) (\varepsilon(G) + \varepsilon(\bar{G})) - \xi^c(\Bar{G}) - \xi^c(G)\right]\,.$$
\end{corollary}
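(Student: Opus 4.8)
The plan is to apply part $(i)$ of Theorem~\ref{thm:from-above} to both $G$ and its complement $\bar{G}$ and then simply add the two resulting inequalities. Writing $n = n(G)$, part $(i)$ gives
$$2W(G)\le (n-1)\,\varepsilon(G) - \xi^c(G) + 2m(G)\,,$$
and, since $n(\bar{G}) = n$ as well,
$$2W(\bar{G})\le (n-1)\,\varepsilon(\bar{G}) - \xi^c(\bar{G}) + 2m(\bar{G})\,.$$

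Adding these two inequalities yields
$$2\bigl(W(G)+W(\bar{G})\bigr) \le (n-1)\bigl(\varepsilon(G)+\varepsilon(\bar{G})\bigr) - \bigl(\xi^c(G)+\xi^c(\bar{G})\bigr) + 2\bigl(m(G)+m(\bar{G})\bigr)\,.$$
The only remaining observation is that each of the $\binom{n}{2}$ pairs of distinct vertices is an edge in exactly one of $G$ and $\bar{G}$, so that $m(G)+m(\bar{G}) = \binom{n}{2}$. Substituting this identity and dividing by $2$ produces precisely the claimed bound.

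There is no genuine obstacle here, as the statement is a direct corollary of Theorem~\ref{thm:from-above}$(i)$; the only point requiring even a moment's thought is the complementary edge count. The one thing I would be careful to flag is the role of the hypothesis that both $G$ and $\bar{G}$ are connected: this is what makes both Wiener indices (and hence the entire inequality) well defined, even though the algebraic manipulation above goes through formally once Theorem~\ref{thm:from-above}$(i)$ is applied to each graph separately.
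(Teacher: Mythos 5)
Your proof is correct and follows exactly the paper's own argument: apply Theorem~\ref{thm:from-above}$(i)$ to both $G$ and $\Bar{G}$, add the inequalities, and use $n(\Bar{G}) = n(G)$ together with $m(G)+m(\Bar{G}) = \binom{n(G)}{2}$. Nothing further is needed.
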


\proof
Summing the inequalities for $G$ and $\Bar{G}$ expressed by Theorem~\ref{thm:from-above} and using the facts $n(\Bar{G}) = n(G)$ and $m(G) + m(\Bar{G}) = \binom{n(G)}{2}$, the result follows. 
\qed

\section{Trees}
\label{sec:trees}

An upper bound on the Wiener index in terms of eccentric connectivity has been reported in \cite{dankelmann-morgan}. Here we determine a sharp upper bound on the Wiener index of a tree in terms of its eccentricity.

 \begin{theorem} If $T$ is a tree of order $n$, then
 \begin{equation*}
 W(T)\leq \frac{1}{4}\,(2n-3)\varepsilon(T) + \frac{1}{4}
 \end{equation*}
 with equality holding if and only if $T$ is the star $K_{1,\,n-1}$.
 \end{theorem}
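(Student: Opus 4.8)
The plan is to prove a sharp \emph{per-vertex} upper bound on the total distance $d(v)$ in terms of the eccentricity $\varepsilon(v)$, sum it over all vertices, and then reduce the claim to an elementary inequality involving only the eccentricities.

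First I would fix a vertex $v$ and set $e=\varepsilon(v)$. Grouping the vertices of $T$ by their distance from $v$, let $n_i$ denote the number of vertices at distance $i$, so that $n_0=1$ and $\sum_{i=0}^{e}n_i=n$. The key structural observation for trees is that a shortest path from $v$ to a vertex realizing the eccentricity meets every distance layer, so $n_i\ge 1$ for all $1\le i\le e$. Since $d(v)=\sum_{i=1}^{e}i\,n_i$ is maximized, subject to $\sum_{i=1}^{e}n_i=n-1$ and $n_i\ge 1$, by pushing all excess mass into the farthest layer ($n_1=\cdots=n_{e-1}=1$, $n_e=n-e$), I obtain
$$d(v)\le \frac{e(e-1)}{2}+e(n-e)=\frac{\varepsilon(v)\big(2n-1-\varepsilon(v)\big)}{2}.$$
A quick check shows this is tight at both the center ($e=1$, giving $d(v)=n-1$) and the leaves ($e=2$, giving $d(v)=2n-3$) of the star, which is the source of equality.

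Summing over all $v$ and using $\sum_{v}\varepsilon(v)=\varepsilon(T)$ gives
$$2W(T)\le \frac{1}{2}\Big[(2n-1)\,\varepsilon(T)-\sum_{v\in V(T)}\varepsilon(v)^2\Big],$$
so the target inequality $W(T)\le \frac14(2n-3)\varepsilon(T)+\frac14$ follows once I establish
$$\sum_{v\in V(T)}\varepsilon(v)\big(2-\varepsilon(v)\big)\le 1.$$
This last step is where the argument really lands: each summand equals $1$ when $\varepsilon(v)=1$, equals $0$ when $\varepsilon(v)=2$, and is negative when $\varepsilon(v)\ge 3$. For $n\ge 3$ a tree has at most one universal vertex, hence at most one vertex of eccentricity $1$; so either every term is $\le 0$ (sum $\le 0$) or exactly one term equals $1$ and $T$ is a star (sum $=1$).

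Finally, for the equality characterization I would trace back through the chain: equality forces $\sum_{v}\varepsilon(v)(2-\varepsilon(v))=1$, which by the above can only occur when $T$ has a vertex of eccentricity $1$, i.e.\ $T\cong K_{1,\,n-1}$; conversely the per-vertex bound is tight on the star, so equality indeed holds there. The main obstacle is less any single computation than (i) pinning down the sharp per-vertex distance bound and recognizing that the residual inequality collapses to the trivially bounded quadratic sum above, and (ii) handling the equality analysis carefully, including the caveat that the ``at most one universal vertex'' fact (and hence the stated inequality) needs $n\ge 3$.
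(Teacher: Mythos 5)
Your proof is correct (for $n \ge 3$, the only regime in which the statement actually holds), and it shares the paper's overall skeleton --- bound the total distance $d(v)$ of each vertex by a function of $\varepsilon(v)$ and $n$, then sum over all vertices --- but the execution is genuinely different. The paper splits into cases at the outset: for the star it verifies equality directly via $W(K_{1,n-1})=(n-1)^2$, and for a non-star it uses $\varepsilon(v)\ge 2$ together with the cruder bound $d(v)\le (n-2)\,\varepsilon(v)+1\le \left(n-\tfrac{3}{2}\right)\varepsilon(v)$, whose summation gives $4W(T)\le (2n-3)\,\varepsilon(T)$, hence strict inequality for non-stars. You instead prove the sharper, uniform layer bound $d(v)\le \tfrac{1}{2}\,\varepsilon(v)\bigl(2n-1-\varepsilon(v)\bigr)$ (valid in any connected graph, via the rearrangement argument on nonempty distance layers) and reduce the theorem to the elementary inequality $\sum_v \varepsilon(v)\bigl(2-\varepsilon(v)\bigr)\le 1$, so the star/non-star dichotomy emerges algebraically at the end rather than being imposed at the start. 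What your route buys: a per-vertex bound that is tight at every vertex of the star (making the equality analysis systematic), and an explicit identification of the $n\ge 3$ hypothesis, which the paper glosses over --- indeed the statement fails for $n=2$, where $W(K_{1,1})=1>\tfrac{3}{4}$, and the paper's own computation $\varepsilon(K_{1,n-1})=2n-1$ also requires $n\ge 3$. What the paper's route buys: brevity --- once the star is set aside, a one-line estimate suffices, with no need for the layer argument or the analysis of the quadratic eccentricity sum.
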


 \begin{proof} If $T = K_{1,\,n-1}$, then 
 $$W(K_{1,\,n-1})=(n-1)^2=\frac{1}{4}\,(2n-3)\varepsilon(K_{1,\,n-1}) + \frac{1}{4}\,,$$
hence the equality holds. If $T$ is not a star, then $\varepsilon(v)\geq 2$ for every vertex $v\in V(T)$. Hence,  $$\left(n-\frac{3}{2}\right)\,\varepsilon(v)=(n-2)\,\varepsilon(v)+\frac{1}{2}\,\varepsilon(v)\geq (n-2)\,\varepsilon(v)+1\geq \sum\limits_{u\in V(T)}\,d(v,\,u)=d(v),$$
 that is,
 $$(2n-3)\,\varepsilon(v)\geq 2d(v).$$
Summing over all vertices of $T$ we get $4W(T)\leq (2n-3)\,\varepsilon(T)$, that is, $4W(T)< (2n-3)\,\varepsilon(T)+1$. 
 \end{proof} \qed
 
We next show that the minimum difference between the Wiener index and the eccentricity of a tree is achieved on caterpillars. Recall that a tree is a {\em caterpillar} if it contains a (diametrical) path, such that any vertex not on the path is at distance $1$ from it.
\begin{theorem}
If $n$ is a positive integer, then $\min\{W(T) - \varepsilon(T):\ T\ {\rm tree},\ n(T) = n\}$ is achieved on a caterpillar.
\end{theorem}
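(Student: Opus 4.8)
The plan is to show that any tree that is \emph{not} a caterpillar can be transformed into another tree of the same order on which the quantity $W(T) - \varepsilon(T)$ does not increase, thereby proving that the minimum is attained on a caterpillar. First I would fix a tree $T$ of order $n$ that minimizes the difference and, arguing by contradiction, suppose $T$ is not a caterpillar. The standard characterization tells us this means $T$ has a vertex at distance at least $2$ from every diametrical (longest) path; equivalently, after deleting all leaves adjacent to the spine, some branching structure of depth $\ge 2$ remains. I would locate a ``deepest'' offending branch: choose a diametrical path $P$ and a subtree $B$ hanging off $P$ at an attachment vertex $w\in V(P)$ such that $B$ contains a vertex at distance $\ge 2$ from $w$.

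The core of the argument is a single local surgery. I would take a pendant subtree $B'$ rooted at a vertex $z$ that sits at distance $\ge 2$ from the spine (so $z$ is an internal, non-spine vertex whose removal detaches a branch that prevents $T$ from being a caterpillar) and relocate it — for instance, by detaching the edge joining $B'$ to its parent and re-attaching $B'$ (or a restructured version of it) closer to the spine, so that the new tree is ``more caterpillar-like.'' The key step is then to verify the inequality
\begin{equation*}
W(T') - \varepsilon(T') \le W(T) - \varepsilon(T),
\end{equation*}
where $T'$ is the modified tree. To control this, I would write both $W$ and $\varepsilon$ as sums over the affected vertices and compare the two trees term by term: the change $\Delta W = W(T') - W(T)$ is computed by tracking how distances from vertices in the moved branch to the rest of the tree change, while $\Delta\varepsilon = \varepsilon(T') - \varepsilon(T)$ tracks the corresponding change in eccentricities. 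The goal is to show $\Delta W \le \Delta\varepsilon$, i.e. that any increase in total distance caused by the move is dominated by a compensating increase in total eccentricity (or that both decrease appropriately).

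The hard part will be choosing the surgery so that $\Delta W - \Delta\varepsilon \le 0$ \emph{simultaneously}, because moving a branch toward the spine typically \emph{decreases} $W$ but can also change eccentricities in a direction that works against the inequality. The delicate bookkeeping lies in the eccentricity term: whereas $W$ changes smoothly with distances, $\varepsilon_G(v)$ is a max over distances and therefore changes only when the branch move actually alters which vertex is farthest from $v$. I would handle this by arranging the move so that the diametrical path $P$ is preserved (so the global diameter and the eccentricities of spine vertices are unaffected), which pins down $\varepsilon$ for most vertices and isolates the comparison to the vertices inside the moved branch and their immediate interaction with the spine. A clean way to make the estimate uniform is to perform the move by a sequence of elementary steps, each sliding one pendant path one edge closer to the spine, and to prove the inequality for a single such elementary step; iterating then drives the tree to a caterpillar. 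I would verify the base inequality for the elementary step by an explicit but short distance count, comparing the contribution of the relocated vertices to $W$ against their contribution to $\varepsilon$, and confirming that the net effect is nonpositive. Establishing monotonicity for this elementary step, and checking that it genuinely reduces the non-caterpillar structure without increasing the order, is where the main effort goes.
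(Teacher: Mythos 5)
Your overall strategy is the same as the paper's: argue via a local surgery that pushes off-spine structure toward a diametrical path $P$, and use the fact that $P$ contains the center, so that every vertex's eccentricity equals its distance to one of the two endpoints of $P$ (this is exactly how the paper controls $\Delta\varepsilon$). However, there is a genuine gap: you never fix the surgery, never carry out the computation of $\Delta W$ and $\Delta\varepsilon$, and the one concrete elementary step you do name --- ``sliding one pendant path one edge closer to the spine'' --- is false in general. The failure mode is the one you flag but do not resolve: sliding a pendant piece away from its parent increases its distance to the parent's \emph{other} children, and when the parent has many children this increase dominates. Concretely, let $T$ consist of a path $x,x_1,x_2,z,y_2,y_1,y$, a vertex $a$ adjacent to $z$, and $k+1$ leaves $t_1,\dots,t_k,q_1$ adjacent to $a$. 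Here the spine is the unique diametrical path and the leaves at $a$ are at distance $2$ from it, so $T$ is not a caterpillar, and the pendant subtrees rooted at distance $\ge 2$ from the spine are exactly the single leaves $t_i,q_1$. Sliding $t_1$ from $a$ to $z$ gives
$$\Delta W = (k+1) - 7 = k-6\,,$$
since the $k+1$ distances from $t_1$ to $a$ and to its siblings each grow by $1$, while the $7$ distances from $t_1$ to the spine each drop by $1$; on the other hand $\Delta\varepsilon = -1$, because only $\varepsilon(t_1)$ changes. Hence $\Delta(W-\varepsilon) = k-5 > 0$ for $k\ge 6$: your elementary step strictly \emph{increases} the quantity being minimized, so the claimed monotonicity cannot be proved.

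The paper's surgery is chosen precisely to kill this bad term. Among all vertices of degree at least $2$ not on $P$, it picks a vertex $u$ \emph{farthest} from $P$; then every neighbor of $u$ except the one toward $P$ is a leaf, and the whole set $S$ of these $s\ge 1$ leaves is moved simultaneously to the parent $v$ of $u$. After this move the only distances that increase are those from $S$ to the single vertex $u$ (each by $1$), distances within $S$ are unchanged, and all $s(n-s-1)$ remaining distances from $S$ decrease by $1$, so
$$W(T)-W(T') = s(n-s-1)-s > s = \varepsilon(T)-\varepsilon(T')\,,$$
where the eccentricity computation uses exactly the observation you made (eccentricities are realized at $x$ or $y$, so only the vertices of $S$ lose $1$). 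Moving the entire sibling set at once, rooted at a \emph{deepest} branching vertex, is the missing idea; with it, a non-caterpillar can never be a minimizer, which is all that is needed since the minimum over the finite set of trees of order $n$ is attained.
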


\proof
Let $x$ and $y$ be diametrical vertices of $T$ and let $P$ the $x,y$-path in $T$. Since $P$ contains the center of $T$, for each vertex $w \in T$, the eccentricity of $w$ is equal to $d(w,x)$ or $d(w,y)$.

Suppose that $T$ is not a caterpillar. This is equivalent to the fact that in $V(T)\setminus V(P)$ there exists at least one vertex which is not of degree $1$. Among all such vertices select a vertex $u$ which is farthest from $P$. That is, $u$ is a vertex from $V(T)\setminus V(P)$ of degree at least $2$, such that $d_T(u,P) = \min\{d(u,v):\ v\in V(P)\}$ is largest. Let $z\in V(P)$ be the unique vertex of $P$ which is closest to $u$, that is, $d_T(u,z) = d_T(u,P)$. Let $d_T(u,z) = \ell$ and note that since $T$ is not a caterpillar, $\ell\ge 1$. Let $v$ be the neighbor of $u$ on the $u,z$-path, and let $T_z$ be the maximal subtree of $T$ that contains $z$ and no other vertex of $P$. Clearly, $\varepsilon_{T_z}(z) = \ell+1\ge 2$. If $\ell=1$, then vertices 
$v$ and $z$ are the same vertex.

Consider the following transformation. Let $u$ and $v$ be two adjacent vertices of $T_z$ which $d(z,u)=\ell$ and $d(z,v)=\ell-1$. Let $S = \{w:\ uw\in E(T)\} \setminus \{v\}$ and $|S|=s\geq 1$. Assume that $T'$ is a tree obtained from $T$ by removing the edges between $u$ and the vertices of $S$ and then connecting vertex $v$ to the vertices of $S$. It is clear that the distances between the vertices of $V(G) \setminus S$ are the same in $T$ and $T'$. Also the distances of vertices of $S$ from the other vertices, except $u$, decrease by 1, and for each vertex $w\in S$ we have  $d_{T'}(u,w)=2, d_{T}(u,w)=1$. Therefore, 
\begin{eqnarray*}
W(T) - W(T') &=& \sum _{\substack { j\in S \\ w\in V(T)-S}}\,\Big[d_{T}(j,w) - d_{T'}(j,w)\Big] \\[3mm] &=&\sum_{j \in S}\,\Big[d_T(j,u)-d_{T'}(j,u)\Big]+\sum_{\substack {j \in S \\ u\neq w \in V(T)-S}}  \,\Big[d_T(j,w)-d_{T'}(j,w)\Big] \\[3mm]
&=& -s +s(n-s-1)=sn-s^2-2s.
\end{eqnarray*}
By the above transformation, each vertex $w \in V(T) \setminus S$ has the same eccentricity in $T$ and in $T'$,  and the eccentricity of the vertices from $S$ decreases by $1$. Hence $\varepsilon(T) - \varepsilon(T') =s$. It follows that
$$W(T) -W(T') = (sn-s^2-2s) >s = \varepsilon(T) -\varepsilon(T')\,,$$
and we are done.
\qed

We next give a formula for $W(T) - \varepsilon(T)$ for a tree $T$ of a given radius. For this sake recall that the {\em line graph}, $L(G)$, of a graph $G$ has the vertex set $V (L(G)) = E(G)$ and two distinct vertices of $L(G)$ are adjacent if the corresponding edges of $G$ share a common end-vertex. Buckley~\cite{Buckley} observed the following simple relation between the Wiener index of a tree $T$ and of its line graph:

\begin{equation}
\label{eq:win}
W(T) = W(L(T)) + \binom{n}{2}\,.
\end{equation}

For the rest of the section we recall that if $T$ is a tree, then its center $C(T)$ consists either of a single vertex or of two adjacent vertices. This fact can be, for instance, deduced by iteratively removing all the leaves of a tree considered, until the center is found. Using~\eqref{eq:win} we next prove the following result.

\begin{theorem}
If $T$ is a tree, $r = \rad(T)$, and $n = n(T)$, then
$$W(T) - \varepsilon(T) = W(L(T)) - \varepsilon(L(T)) + \frac{n(n-3)}{2} -r +1\,.$$
\end{theorem}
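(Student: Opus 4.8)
The plan is to combine the Buckley identity~\eqref{eq:win} with an exact relation between the total eccentricities of $T$ and of $L(T)$. Writing $r = \rad(T)$ and $n = n(T)$, equation~\eqref{eq:win} gives $W(T) - W(L(T)) = \binom{n}{2}$, so subtracting $\varepsilon(T)$ and $\varepsilon(L(T))$ and comparing with the claimed right-hand side reduces the theorem to the single identity
$$\varepsilon(T) - \varepsilon(L(T)) = n + r - 1\,.$$
Thus the whole proof becomes the computation of $\varepsilon(L(T)) = \sum_{e \in E(T)} \varepsilon_{L(T)}(e)$ in terms of vertex data of $T$.

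First I would pin down distances in $L(T)$. Since every edge of a tree is a bridge, a short argument along the unique path joining two edges shows that for distinct edges $e = uv$ and $f$ one has $d_{L(T)}(e,f) = 1 + \min\{d_T(a,b) : a \in e,\ b \in f\}$. Writing $\varepsilon_T(e) = \max_{w \in V(T)} \min\{d_T(u,w), d_T(v,w)\}$ for the eccentricity of $e$ viewed as an edge of $T$, I would then observe that a vertex of $T$ farthest from $e$ must be a leaf (it cannot have a neighbor strictly farther from $e$, yet in a tree every neighbor except the one toward $e$ is farther), and that the edge incident to such a leaf realizes the maximum in $\varepsilon_{L(T)}(e) = 1 + \max_{f \ne e} \min\{d_T(a,b) : a \in e,\ b \in f\}$. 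This yields the clean identity $\varepsilon_{L(T)}(e) = \varepsilon_T(e)$, hence $\varepsilon(L(T)) = \sum_{e} \varepsilon_T(e)$.

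Next I would exploit the structure of eccentricities on a tree, namely $\varepsilon_T(v) = r + d(v, C(T))$. Consequently the eccentricities of the two endpoints of an edge differ by exactly $1$, unless the edge is the central edge, which occurs only when $C(T)$ is an edge and then both endpoints have eccentricity $r$. Removing $e = uv$ splits $T$ into the $u$-side and the $v$-side, of depths $a$ and $b$ respectively, and one checks $\varepsilon_T(e) = \max(a,b)$. Comparing this with $\varepsilon_T(u)$ and $\varepsilon_T(v)$ gives $\varepsilon_T(e) = \min(\varepsilon_T(u), \varepsilon_T(v))$ for a non-central edge and $\varepsilon_T(e) = r - 1$ for the central edge.

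Finally I would sum. Rooting $T$ at its center and indexing every non-central edge by its endpoint farther from the center, each non-central vertex is hit exactly once and its edge contributes $\varepsilon_T(e) = \varepsilon_T(\text{farther endpoint}) - 1$; when $C(T)$ is an edge, the central edge contributes $r-1$ while the two central vertices of eccentricity $r$ are removed from the vertex sum. In both the single-vertex-center and the edge-center case this telescopes to the same value $\sum_{e} \varepsilon_T(e) = \varepsilon(T) - n - r + 1$, whence $\varepsilon(T) - \varepsilon(L(T)) = n + r - 1$ and the theorem follows. The main obstacle is the clean line-graph computation $\varepsilon_{L(T)}(e) = \varepsilon_T(e)$ — in particular verifying that the maximizing edge $f$ is the leaf-edge at the vertex of $T$ farthest from $e$ — together with checking that the concluding summation is handled uniformly in the two center cases.
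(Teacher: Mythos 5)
Your proof is correct and follows essentially the same route as the paper: both reduce the theorem via Buckley's identity~\eqref{eq:win} to the single relation $\varepsilon(T) - \varepsilon(L(T)) = n + r - 1$, and both establish that relation through the same bijection pairing each non-central vertex $v$ with the edge joining $v$ to its neighbor nearer the center, split into the one-vertex and two-vertex center cases. The only difference is one of detail: you rigorously justify (via the line-graph distance formula, the farthest-vertex-is-a-leaf observation, and the identity $\varepsilon_{L(T)}(e) = \varepsilon_T(e)$ together with $\varepsilon_T(e)=\min\{\varepsilon_T(u),\varepsilon_T(v)\}$ for non-central $e=uv$) precisely the claim that the paper dismisses with ``it is not difficult to see,'' namely $\varepsilon_{T}(v) = \varepsilon_{L(T)}(f(v)) + 1$, including the value $r-1$ for the central edge used implicitly in the paper's second case.
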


\proof
First we are going to find a relation between $\varepsilon(T)$ and $\varepsilon(L(T))$. For a vertex $v$, denote by $d(v,C(T))$ the minimum distance between $v$ and central vertices of $T$. For each non-central vertex $v$, there is a unique adjacent vertex $w$ such that $d(v,C(T)) = d(w,C(T)) + 1$.
Consider the bijection $f: V(T)\setminus C(T) \rightarrow E(T) \setminus E(C(T))$, where $f(v) = vw$. It is not difficult to see that $\varepsilon_{T}(v) = \varepsilon_{L(T)}(f(v)) + 1$. We consider two cases.

Suppose first that $C(T)=\{p\}$. Then
\begin{eqnarray*}
\varepsilon(T) & = & r + \sum_{v\in V(T)-C(T)} \varepsilon_T(v) \\[2mm]
  & = & r + \sum_{v\in V(T)-C(T)}\,\Big[\varepsilon_L(T)(vw)+1\Big]\\[2mm]
  & = & r+n-1 + \varepsilon(L(T))\,.
\end{eqnarray*}
In the second case assume that $C(T)=\{p, q\}$. Then
\begin{eqnarray*}
\varepsilon(T) & = & 2r + \sum_{v\in V(T),\,v \neq p,\,q}\varepsilon(v) \\[3mm]
  & = & 2r + \sum_{v\in V(T),\,v \neq p,\,q}\,\Big[\varepsilon(f(v)) +1\Big] \\[3mm]
  & = & 2r + \varepsilon(L(T)) - (r-1) + n-2 \\
  & = & \varepsilon(L(T)) + n+r-1\,.
\end{eqnarray*}
We have thus seen that in each of the cases $\varepsilon(T) = \varepsilon(L(T)) + n+r-1$ holds. The assertion of the theorem follows by combining this relation with~\eqref{eq:win}.
\qed

From~\cite[Theorem 2]{dankelmann-2004}, which is stated in terms of the average distance and average eccentricity, we extract that if $T$ is a tree, $n = n(T)$, and $r = \rad(T)$, then
\begin{equation}
\label{prop:ecc}
\varepsilon(T)=
\begin{cases}
 d_T(p) +  nr; & C(T) = \{p\}, \\[3mm]
 \displaystyle{\frac{d_T(p) + d_T(q) -n}{2}} +  nr; & C(T)=\{p,q\}\,.
\end{cases}
\end{equation}
In the last result of the section we apply~\eqref{prop:ecc} to give a lower bound on the eccentricity of a tree in terms of its radius.

\begin{theorem}
If $T$ is a tree, $n = n(T)$, and $r = \rad(T)$, then
$$ \varepsilon   \big(T\big)   \geq
\begin{cases}
r(n+r+1); & |C(T)|=1, \\ \\
\displaystyle{\frac{2r(n+r)-n}{2}}; & |C(T)|=2\,.
\end{cases}
$$
Moreover, the equality holds if and only if $T$ is a path.
\end{theorem}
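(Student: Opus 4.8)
The plan is to substitute into the formula~\eqref{prop:ecc} suitable lower bounds on the total distances $d_T(p)$ (and $d_T(q)$) of the central vertices, where these bounds are read off from the structure of a diametral path. I would use two classical facts about trees that the paper has effectively already invoked: the centre is a single vertex exactly when $\diam(T)=2r$ and a pair of adjacent vertices exactly when $\diam(T)=2r-1$, and in either case the centre is the midpoint of every longest path. The main conceptual content is the estimate of the total distance; matching the stated expressions is then routine algebra.

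First I would treat the case $C(T)=\{p\}$, where $\diam(T)=2r$, so that a diametral path passes through $p$ and is split by $p$ into two branches, each of length $r$. For every $j\in\{1,\dots,r\}$ these branches contain one vertex at distance $j$ from $p$ in each branch, and hence contribute $2(1+2+\cdots+r)=r(r+1)$ to $d_T(p)$; every vertex of $T$ outside this path contributes a further positive amount. Thus $d_T(p)\ge r(r+1)$, and the first line of~\eqref{prop:ecc} gives $\varepsilon(T)=d_T(p)+nr\ge r(r+1)+nr=r(n+r+1)$.

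Next I would treat the case $C(T)=\{p,q\}$, where $\diam(T)=2r-1$ and a diametral path runs through the central edge $pq$. Measured from $p$, that path consists of a branch of length $r-1$ on the $p$-side of $pq$ together with a branch of length $r$ reaching across $q$, so its vertices contribute $\tfrac{(r-1)r}{2}+\tfrac{r(r+1)}{2}=r^{2}$ to $d_T(p)$; hence $d_T(p)\ge r^{2}$, and by the symmetric argument $d_T(q)\ge r^{2}$. Substituting $d_T(p)+d_T(q)\ge 2r^{2}$ into the second line of~\eqref{prop:ecc} yields $\varepsilon(T)\ge\tfrac{2r^{2}-n}{2}+nr=\tfrac{2r(n+r)-n}{2}$, as claimed.

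The step I expect to require the most care is the equality discussion. In each case the estimate only counts the vertices lying on one fixed diametral path (there are $2r+1$ of them when $|C(T)|=1$ and $2r$ when $|C(T)|=2$), and any vertex of $T$ off this path is distinct from the centre and therefore adds a strictly positive term to $d_T(p)$, respectively to $d_T(p)+d_T(q)$. Consequently equality forces every vertex to lie on the diametral path, so $T$ is that path; this gives $T\cong P_{2r+1}$ when $|C(T)|=1$ and $T\cong P_{2r}$ when $|C(T)|=2$, and a direct check confirms that these paths realise equality. The only genuine obstacle is making the structural claims about the diametral path rigorous, namely that the centre really is its midpoint and that the branches from the centre have the stated lengths; but this is exactly the standard description of the centre of a tree already used in the paper when discussing $|C(T)|\in\{1,2\}$.
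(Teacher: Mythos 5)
Your proposal is correct and follows essentially the same route as the paper: both lower-bound $d_T(p)$ (and $d_T(q)$) by $r(r+1)$, respectively $r^2$, using the vertices at each distance along a diametral path through the centre, and then substitute into~\eqref{prop:ecc}. Your version merely makes explicit the diametral-path justification and the equality discussion that the paper states more tersely.
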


\proof
We again consider two cases based on the cardinality of the center of $T$. 

\medskip\noindent
\textbf{Case 1}: $C(T)= \left\{ p \right\}$.\\
In this case, for any integer $t$, where $1 \le t \le r$, there are at least two vertices at distance $t$ from $p$. Thus $d_T(p) \ge 2(1+2+ \cdots +r) = r(r+1)$.

\medskip\noindent
\textbf{Case 2}: $C(T)= \left\{ p ,q \right\}$.\\
Analogously, for any integer $1\le t \le r-1$, there are at least two vertices of distance $t$ and a vertex of distance $r$ from central vertex $p$. Then
$d_T(p) \ge 2(1+2+ \cdots + r-1) + r = r^2$.

Note that in any of the two cases, the equality holds if and only if $T$ is a path.
The result now follows from~\eqref{prop:ecc}.
\qed

\section{Two conjectures}
\label{sec:conclude}

If $e$ is an edge of a graph $G$, then let $G.e$ denote the graph obtained from $G$ by contracting the edge $e$. Our first conjecture asserts the following.

\begin{conjecture}
\label{conj:contract}
If $e$ is an edge of a graph $G$ with $n(G)\ge 3$, then
$$W(G.e) - \varepsilon(G.e) \le W(G) - \varepsilon(G)\,.$$
\end{conjecture}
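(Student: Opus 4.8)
The plan is to rewrite the asserted inequality as
$$W(G) - W(G.e) \ge \varepsilon(G) - \varepsilon(G.e)\,,$$
so that the task becomes comparing how much each invariant drops under the contraction. Two elementary monotonicity facts drive everything. Writing $\phi$ for the quotient map $V(G)\to V(G.e)$, projecting a shortest path gives $d_{G.e}(\phi(u),\phi(v))\le d_G(u,v)$, while lifting a shortest path of $G.e$ and reinserting the contracted edge at most once gives $d_G(u,v)\le d_{G.e}(\phi(u),\phi(v))+1$. Hence every pairwise distance, and therefore every eccentricity, drops by exactly $0$ or $1$ under a single contraction; in particular both sides above are non-negative, and the content of the conjecture is that the (possibly large) Wiener drop always dominates the eccentricity drop.

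I would first settle the case that $e=xy$ is a bridge, which is the statement I expect to be provable in full. Removing $e$ splits $G$ into components $A\ni x$ and $B\ni y$ with $|A|=a$, $|B|=b$, and $a+b=n$; since $e$ is a bridge, the only pairwise distances that change are those between $A$ and $B$, and a direct bookkeeping (being careful that $x$ and $y$ are identified to a single vertex $z$) yields the exact value
$$W(G) - W(G.e) = ab + d_A(x) + d_B(y)\,,$$
where $d_A(x)$ and $d_B(y)$ denote the total distances of $x$ in $A$ and of $y$ in $B$. In particular $W(G)-W(G.e)\ge ab$.

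Next I would bound the eccentricity drop. For $p\in A$ one has $\varepsilon_G(p)=\max\{\varepsilon_A(p),\,d_A(p,x)+1+\varepsilon_B(y)\}$ and $\varepsilon_{G.e}(p)=\max\{\varepsilon_A(p),\,d_A(p,x)+\varepsilon_B(y)\}$, with the symmetric statement for $B$, so a surviving vertex loses eccentricity precisely when its eccentric vertex lies across the bridge. Summing these $0/1$ contributions over the $n-2$ vertices different from $x,y$, and adding the term $\varepsilon_G(x)+\varepsilon_G(y)-\varepsilon_{G.e}(z)$ for the merged vertex, gives $\varepsilon(G)-\varepsilon(G.e)\le n+\max\{\varepsilon_A(x),\varepsilon_B(y)\}$. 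Comparing this with the exact Wiener drop and using $\varepsilon_A(x)\le d_A(x)$ together with $ab\ge a+b$ whenever $a,b\ge 2$ closes the inequality; the remaining leaf case $\min\{a,b\}=1$, where $G.e$ is simply $G$ with a leaf deleted, is handled by a sharper direct comparison and is exactly where equality can occur (for example $G=P_3$).

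The main obstacle is the general case, when $e$ lies on a cycle. Then contraction can reroute shortest paths between arbitrary pairs of vertices, so the set of pairs whose distance decreases is no longer confined to a clean bipartition and $W(G)-W(G.e)$ has no simple closed form; worse, each eccentricity is a maximum and can decrease in ways only loosely tied to the Wiener drop. A promising line of attack would be to charge each unit of eccentricity decrease to a distinct unit of Wiener decrease, reducing the cyclic situation to the bridge case by a local rerouting or discharging argument; but producing such an injective charging in the presence of cycles is precisely what I cannot currently guarantee, which is why the statement is posed as a conjecture rather than a theorem.
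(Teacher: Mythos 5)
Your proposal is sound and takes essentially the same route as the paper: the paper likewise leaves the general statement as a conjecture and proves exactly the bridge case as its partial support, using the same decomposition of $G-e$ into the two components, the same exact formula $W(G)-W(G.e)=n(G_u)n(G_v)+d_{G_u}(u)+d_{G_v}(v)$, and a case split on whether the smaller side is a single vertex. Your bound on the eccentricity drop, $\varepsilon(G)-\varepsilon(G.e)\le n+\max\{\varepsilon_A(x),\varepsilon_B(y)\}$ obtained from the max-formulas across the bridge, is slightly sharper than the paper's cruder bound $2n(G)-2$, but this is a detail of bookkeeping, not a different method, and both arguments leave the cycle-edge case open exactly as the conjecture's status requires.
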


The next result is a partial support for the conjecture.

\begin{theorem}
If $e$ is a bridge of a graph $G$ with $n(G)\ge 3$, then
$$W(G.e) - \varepsilon(G.e) \le W(G) - \varepsilon(G)\,.$$
\end{theorem}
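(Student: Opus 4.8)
The plan is to exploit the bridge structure to split everything across the two sides of $e$. Write $e=uv$ and let $G_u,G_v$ be the components of $G-e$ containing $u$ and $v$, with $a=|V(G_u)|$ and $b=|V(G_v)|$, so $a+b=n=n(G)$ with $a,b\ge 1$ and not both equal to $1$ (as $n\ge 3$). Contracting $e$ identifies $u$ and $v$ into a single vertex $w$, and since $e$ is a bridge every $G_u$--$G_v$ path in $G$ runs through $e$; hence for $x\in V(G_u)$ and $y\in V(G_v)$ one has $d_G(x,y)=d_{G_u}(x,u)+1+d_{G_v}(v,y)$, while in $G.e$ the same pair satisfies $d_{G.e}(x,y)=d_{G_u}(x,u)+d_{G_v}(v,y)$, and distances within a side are unchanged. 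First I would use this to obtain the exact identity
$$W(G)-W(G.e)=d_{G_u}(u)+d_{G_v}(v)+ab\,,$$
where $d_{G_u}(u)=\sum_{x\in V(G_u)}d_{G_u}(u,x)$. This is the easy, purely computational half: the within-side contributions cancel, and only the cross terms, counted with the correct multiplicities $b$, $a$ (before) and $b-1$, $a-1$ (after), survive.

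Next I would analyse the eccentricities, where the asymmetry with $W$ appears. Put $p=\varepsilon_{G_u}(u)$ and $q=\varepsilon_{G_v}(v)$. For $x\in V(G_u)\setminus\{u\}$ a farthest vertex lies in $G_u$ or in $G_v$, giving $\varepsilon_G(x)=\max\{\varepsilon_{G_u}(x),\,d_{G_u}(x,u)+1+q\}$ and $\varepsilon_{G.e}(x)=\max\{\varepsilon_{G_u}(x),\,d_{G_u}(x,u)+q\}$; hence $\varepsilon_G(x)-\varepsilon_{G.e}(x)\in\{0,1\}$, and symmetrically for $x\in V(G_v)\setminus\{v\}$. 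For the identified vertex, using $\varepsilon_G(u)=\max\{p,q+1\}$, $\varepsilon_G(v)=\max\{q,p+1\}$, and $\varepsilon_{G.e}(w)=\max\{p,q\}$, a short case distinction on whether $p=q$ gives $\varepsilon_G(u)+\varepsilon_G(v)-\varepsilon_{G.e}(w)\le \max\{p,q\}+2$. Summing over all vertices yields $\varepsilon(G)-\varepsilon(G.e)\le (n-2)+\max\{p,q\}+2=n+\max\{p,q\}$.

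I would then combine the two halves via the trivial bounds $d_{G_u}(u)\ge p$ and $d_{G_v}(v)\ge q$ (a total distance dominates the corresponding eccentricity). In the principal case $a,b\ge 2$ we have $ab\ge a+b=n$, whence
$$W(G)-W(G.e)\ge p+q+ab\ge \max\{p,q\}+n\ge \varepsilon(G)-\varepsilon(G.e)\,,$$
which is exactly the desired inequality $W(G.e)-\varepsilon(G.e)\le W(G)-\varepsilon(G)$.

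The hard part will be the degenerate case $\min\{a,b\}=1$, say $b=1$, where the bridge ends in a leaf; then $ab=n-1<n$ and the crude estimate above is too weak by one. Here I would sharpen the eccentricity count instead of bounding bluntly: with $q=0$ the identified vertex contributes only $\varepsilon_G(u)+\varepsilon_G(v)-\varepsilon_{G.e}(w)=p+1$, and a vertex $x\in V(G_u)\setminus\{u\}$ satisfies $\varepsilon_G(x)-\varepsilon_{G.e}(x)=1$ only when $u$ is a farthest vertex of $x$ in $G_u$, so at most $a-1$ such drops occur. This gives $\varepsilon(G)-\varepsilon(G.e)\le (a-1)+(p+1)=a+p$, while $W(G)-W(G.e)=d_{G_u}(u)+(n-1)\ge p+a$, closing the gap exactly. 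I would finish by noting that $P_3$ (leaf bridge) and $P_4$ (internal bridge) both attain equality, confirming that this case-split leaves no slack and that the degenerate case is genuinely the tight one.
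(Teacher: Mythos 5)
Your proposal is correct and takes essentially the same route as the paper's own proof: the same bridge decomposition, the same exact identity $W(G)-W(G.e)=d_{G_u}(u)+d_{G_v}(v)+n(G_u)\,n(G_v)$, the same observation that each surviving vertex's eccentricity drops by at most $1$, and the same case split on whether the smaller side of the bridge is a single vertex. The only difference is bookkeeping: you track the merged vertex's contribution exactly via $p=\varepsilon_{G_u}(u)$ and $q=\varepsilon_{G_v}(v)$ and use $d_{G_u}(u)\ge p$, whereas the paper uses the cruder bounds $d_{G_u}(u)\ge n(G_u)-1$ and the diameter bound $n(G)-1$ on the removed vertex's eccentricity.
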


\proof
Let $e=uv$ and let $G -uv = G_u \cup G_v$ where $G_u$ and $G_v$ are the components of $G -uv$ containing $u$ and $v$ respectively. Then
\begin{eqnarray*}
W(G) &=& W(G_u) + W(G_v) + \sum_{x\in G_u}\sum_{y\in G_v} d(x,y) \\ &=& W(G_u) + W(G_v) + \sum_{x\in G_u}\sum_{y\in G_v} d(x,u)+1+d(v,y)\\ &=&
W(G_u) + W(G_v) + n(G_v)d_{G_u}(u) + n(G_u)n(G_v)+ n(G_u)d_{G_v}(v)\,.
\end{eqnarray*}
Since $e=uv$ is a bridge, for every $x\in V(G_u)$ and every $y\in V(G_v)$ we have $d_{G}(x, y) = d_{G}(x, u) + 1 + d_{G}(v, y)$. This in turn implies that $d_{G.e}(x, y) = d_{G}(x, y) - 1$. Therefore, in $G.e$ we have
\begin{eqnarray*}
W(G.e) &=& W(G_u) + W(G_v) + \sum_{x\in G_u}\sum_{y\in G_v} d(x,y) \\ &=&
W(G_u) + W(G_v)+ (n(G_v)-1) d_{G_u}(u) + (n(G_u)-1)d_{G_v}(v).
\end{eqnarray*}
Hence
$$W(G) - W(G.e) = d_{G_u}(u) + d_{G_v}(v) + n(G_u)n(G_v)$$
Let $n(G_v)\le n(G_u)$. If $n(G_v) =1$, then the eccentricity of the vertices from $G_u-u$ decreases by at most $1$. Note further that the eccentricity of $u$ does not change. Hence
$$\varepsilon(G) - \varepsilon(G.e) \le n(G_u)-1 + n(G_u) = 2n(G_u)-1.$$
Then
$$ W(G) -W(G.e) \ge n(G_u)-1 + n(G_v)-1 + n(G_u) \ge \varepsilon(G) - \varepsilon(G.e)\,.$$
In the case when $n(G_v)\ge 2$, then we get
\begin{eqnarray*}
W(G) -W(G.e) &\ge& n(G_u)-1 +n(G_v) -1 + n(G_u)n(G_v) \\ &\ge & n(G_u)+n(G_v)-2 + n(G_u) + n(G_v) =2n(G)-2.
\end{eqnarray*}
The eccentricity of the vertices from $V(G.e)$ decreases by at most $1$ and the eccentricity of the removed vertex by at most $n(G)-1$. Then $\varepsilon(G) -\varepsilon(G.e) \le 2n(G) -2 \le W(G) -W(G.e)$. Therefore, the difference of the Wiener index of two graphs is greater than or equal to the difference of their total eccentricities.
\qed

In case Conjecture~\ref{conj:contract} holds true, it cannot be extended to all minors $H$ of $G$, because the same property does not hold for removing edges. For a simple example consider the paw graph $G$ (a graph obtained by adding a pendant vertex to a triangle), and let $H=G-e$, where $e$ is the edge of $G$ with both end-vertices of degree $2$, that is $H = K_{1,3}$. Then $W(H) - \varepsilon(H) = 9 - 7 = 2 > 1 = 8 - 7 = W(G) - \varepsilon(G)$.

Our second conjecture asserts that the difference between the Wiener index of a graph and its eccentricity is largest possible on paths. More precisely:

\begin{conjecture}
If $G$ is a graph of order $n$ with $\rad(G) \ge 4$, then
$$W(G)-\varepsilon(G) \le  \left\lfloor \frac{1}{6}n^3 -  \frac{3}{4}n^2 + \frac{1}{3}n + \frac{1}{4} \right\rfloor$$
with equality holding if and only if $G$ is a path.
\end{conjecture}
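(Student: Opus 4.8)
The plan is to attack this conjecture in two phases, after first recasting it in a more workable form. A direct computation gives $W(P_n) = \binom{n+1}{3} = \tfrac{1}{6}(n^3 - n)$, while the eccentricity sequence of $P_n$ sums to $\varepsilon(P_n) = \tfrac{1}{4}(3n^2 - 2n)$ for even $n$ and $\tfrac{1}{4}(3n^2 - 2n - 1)$ for odd $n$. Hence
$$W(P_n) - \varepsilon(P_n) = \frac{1}{6}n^3 - \frac{3}{4}n^2 + \frac{1}{3}n + \begin{cases} 0, & n \text{ even}, \\ \tfrac14, & n \text{ odd}, \end{cases}$$
and in both cases this equals the claimed floor (the even value is already an integer, and the odd value is an integer lying exactly $\tfrac14$ below the bracketed expression). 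So the conjecture says precisely that $P_n$ uniquely maximises $W(G) - \varepsilon(G)$. Since $P_n$ is the unique maximiser of $W$ over all connected graphs of order $n$, and (as one checks, $P_n$ having the largest possible diameter $n-1$) also maximises $\varepsilon$, it is natural to set $\delta_W(G) = W(P_n) - W(G) \ge 0$ and $\delta_\varepsilon(G) = \varepsilon(P_n) - \varepsilon(G) \ge 0$; the goal then collapses to the single inequality $\delta_\varepsilon(G) \le \delta_W(G)$, strict unless $G = P_n$ (the strictness being free from uniqueness of the Wiener maximiser). In words: whenever a graph falls short of the path in total eccentricity, it must fall short by at least as much in Wiener index.

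For Phase~1 I would reduce to trees. If $G$ is not a tree it has a non-bridge edge $e = xy$; deleting it keeps $G - e$ connected and weakly increases every distance, so both $\Delta W = W(G-e) - W(G)$ and $\Delta\varepsilon = \varepsilon(G-e) - \varepsilon(G)$ are nonnegative, and the step is safe exactly when $\Delta W \ge \Delta\varepsilon$. This fails at small radius — deleting an edge of the diamond $K_4 - e$ produces $C_4$ and \emph{decreases} $W - \varepsilon$ from $1$ to $0$ — which is where the hypothesis $\rad(G) \ge 4$ must earn its keep: I would show that for large radius one can always pick a non-bridge $e$ on a sufficiently long cycle, so that removing it lengthens many shortest paths (raising $W$ substantially) while inflating only a few eccentricities, forcing $\Delta W \ge \Delta\varepsilon$. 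Iterating down to a spanning tree never decreases $W - \varepsilon$, and since deletions only increase the radius, the hypothesis $\rad \ge 4$ is preserved throughout.

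Phase~2 is the conjecture for trees, i.e.\ that $P_n$ maximises $W(T) - \varepsilon(T)$ among trees of order $n$. Here I would run the standard extremal-tree argument in the direction \emph{opposite} to the caterpillar theorem proved above: if $T \ne P_n$ it has a branch vertex off a longest path, and ``stretching'' a branch to extend that path strictly increases $W$; the work is to verify that this grafting raises $\varepsilon$ by no more than it raises $W$, so $W - \varepsilon$ strictly increases and the process terminates at $P_n$. Alternatively, the line-graph identity established earlier, combined with $L(P_n) = P_{n-1}$, yields a clean recursion for the path value that could anchor an induction on $n$, with a general tree handled by comparing $W(L(T)) - \varepsilon(L(T))$ against the path's.

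The hard part will be Phase~1. Any per-vertex relaxation is hopelessly loose: bounding $\tfrac12 d(v) - \varepsilon(v)$ by its maximum over graphs with $\varepsilon(v)$ fixed and summing gives a bound of order $\tfrac{n^3}{4}$, well above the target $\tfrac{n^3}{6}$, because only endpoint-like vertices saturate it and no graph has all its vertices of that type. The proof must therefore exploit global structure, and the genuinely delicate point is the joint control of $W$ and $\varepsilon$ under edge deletion — quantifying, for a graph of radius at least $4$, an edge whose removal provably gains more in total distance than in total eccentricity. Identifying the right cycle edge, and ruling out the small-radius obstructions that the hypothesis $\rad(G)\ge 4$ is evidently designed to exclude, is where I expect the main difficulty to lie.
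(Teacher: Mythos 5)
This statement is one of the paper's two concluding \emph{conjectures}: the paper offers no proof of it (it only checks, via the trees $T_7$ and $T_8$, that the hypothesis $\rad(G)\ge 4$ cannot be dropped), and your proposal is not a proof either. Your preliminary computations are correct --- $W(P_n)=\binom{n+1}{3}$, $\varepsilon(P_n)=\tfrac14(3n^2-2n)$ or $\tfrac14(3n^2-2n-1)$ according to parity, and the path attains the stated floor --- so the conjecture is indeed equivalent to $\delta_\varepsilon(G)\le\delta_W(G)$ with equality only for paths. But both of your phases stop exactly where the difficulty begins. In Phase~1 the entire content is the claim that a graph of radius at least $4$ always contains a non-bridge edge $e$ with $W(G-e)-W(G)\ge \varepsilon(G-e)-\varepsilon(G)$; you give no argument for this, and your own diamond example shows it can fail without the radius hypothesis, so some quantitative use of the radius is indispensable --- nothing in the proposal supplies it. In Phase~2 the tree case is itself open: the paper's tree results go in the opposite direction (caterpillars \emph{minimize} $W-\varepsilon$ among trees of a given order), and your grafting step again defers the decisive verification (``the work is to verify that this grafting raises $\varepsilon$ by no more than it raises $W$'').

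There is also a genuine logical error in your reduction. You assert that strictness of $\delta_\varepsilon(G)\le\delta_W(G)$ for $G\ne P_n$ is ``free from uniqueness of the Wiener maximiser.'' Uniqueness gives $\delta_W(G)>0$; it says nothing about the gap between $\delta_\varepsilon(G)$ and $\delta_W(G)$, which is what must be bounded away from zero. That this is not free is precisely what the paper's examples show: for $T_7$ (radius $3$) one computes $W(P_7)-W(T_7)=56-52=4$ and $\varepsilon(P_7)-\varepsilon(T_7)=33-29=4$, so $\delta_\varepsilon=\delta_W$ for a non-path. Hence the weak inequality can hold with equality off the path family, the equality characterization is inseparable from the radius hypothesis, and it needs its own argument. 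As it stands, your text is a research plan whose two main steps and whose equality analysis are all unproven; it does not establish the conjecture, which remains open in the paper as well.
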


The condition $\rad(G) \ge 4$ is posed because otherwise the equality case is achieved also by graphs different from paths. For instance, let $T_7$ be the tree obtained from $P_6$ by adding one new vertex and connecting it with an edge with the second vertex of $P_6$. Then $\rad(T_7) = 3$ and $W(T_7)-\varepsilon(T_7) = 52 - 29 = 23$, which is the equality case in the above expression. Similarly, if $T_8$ is the tree obtained from $P_7$ by adding an extra vertex adjacent with the second vertex of $P_7$, then $\rad(T_7) = 3$ and $W(T_7)-\varepsilon(T_7) = 79 - 39 = 40$, again the equality case in the above expression.

\section*{Acknowledgments}

S.K.\ acknowledges the financial support from the Slovenian Research Agency (research core funding P1-0297 and projects J1-9109, J1-1693,  N1-0095, N1-0108). K.C.\ Das was supported by the National Research Foundation of the Korean government with grant No. 2017R1D1A1B03028642.


\begin{thebibliography}{10}

\bibitem{alizadeh-2018}
  Y.~Alizadeh, S.~Klav\v{z}ar,
  On graphs whose Wiener complexity equals their order and on Wiener index of asymmetric graphs,
  {\em Appl.\ Math.\ Comput.} {\bf 328} (2018) 113--118.

\bibitem{Buckley}
  F.~Buckley,
  Mean distance in line graphs,
  {\em Congr.\ Numer.} {\bf 32} (1981) 153--162.

\bibitem{buckley-1989}
  F.~Buckley,
  Self-centered graphs.
  {\em Ann.\ New York Acad.\ Sci.} {\bf 576} (1989) 71--78.

\bibitem{casablanca-2018}
  R.M.~Casablanca, P.~Dankelmann,
Distance and eccentric sequences to bound the Wiener index, Hosoya polynomial and the average eccentricity in the strong products of graphs,
  {\em Discrete Appl.\ Math.} 263 (2019) 105--117.
  
\bibitem{dankelmann-2004}
  P.~Dankelmann, W.~Goddard, H.C.~Swart,
  The average eccentricity of a graph and its subgraphs,
  {\em Util.\ Math.} {\bf 65} (2004) 41--51.

\bibitem{dankelmann-morgan}
  P.~Dankelmann, M.J.~Morgan, S.~Mukwembi, H.C.~Swart,
  On the eccentric connectivity index and Wiener index of a graph,
  {\em Quaest.\ Math.} {\bf 37} (2014) 39--47.

\bibitem{dankelmann-2014}
  P.~Dankelmann, S.~Mukwembi,
  Upper bounds on the average eccentricity,
  {\em Discrete Appl.\ Math.} {\bf 167} (2014) 72--79.

\bibitem{dankelmann-2020}
  P.~Dankelmann, F.J.~Osaye,
  Average eccentricity, minimum degree and maximum degree in graphs,
  {\em J.\ Comb.\ Optim.} {\bf 40} (2020) 697--712.

\bibitem{das-2017}
  K.C.~Das, A.D.~Maden, A. Dilek, I.N.~Cang\"ul, A.S.~\c{C}evik,
  On average eccentricity of graphs,
  {\em Proc.\ Nat.\ Acad.\ Sci.\ India Sect.\ A} {\bf 87} (2017) 23--30.

\bibitem{das-2015}
  K.C.~Das, M.J.~Nadjafi-Arani,
  Comparison between the Szeged index and the eccentric connectivity index,
  {\em Discrete Appl.\ Math.} {\bf 186} (2015) 74--86.

\bibitem{das-2017b}
  K.C.~Das, M.J.~Nadjafi-Arani,
  On maximum Wiener index of trees and graphs with given radius,
  {\em J.\ Comb.\ Optim.} {\bf 34} (2017) 574--587.

\bibitem{de-2015}
  N.~De, Sk.Md.~Abu Nayeem, A.~Pal,
  Total eccentricity index of the generalized hierarchical product of graphs,
  {\em Int.\ J.\ Appl.\ Comput.\ Math.} {\bf 1} (2015) 503--511.

\bibitem{dobrynin-2018}
  A.A.~Dobrynin,
  The Szeged and Wiener indices of line graphs,
  {\em MATCH Commun.\ Math.\ Comput.\ Chem.} {\bf 79} (2018)  743--756.

\bibitem{dobrynin-2001}
  A.A.~Dobrynin, R.~Entringer, I.~Gutman,
  Wiener index of trees: theory and applications,
  {\em Acta Appl.\ Math.} {\bf 66} (2001) 211--249.

\bibitem{dobrynin-2002}
  A.A.~Dobrynin, I.~Gutman, S.~Klav\v{z}ar, P.~\v{Z}igert,
  Wiener index of hexagonal systems,
  {\em Acta Appl.\ Math.} {\bf 72} (2002) 247--294.

\bibitem{dobrynin-2012}
  A.A.~Dobrynin, L.S.~Mel'nikov,
  Wiener index of line graphs,
  in I. Gutman, B. Furtula (Eds.) Distance in Molecular Graphs - Theory,
  Univ. Kragujevac, Kragujevac (2012) 85--121.

\bibitem{doslic-2014}
  T.~Do\v sli\' c, M.~Saheli,
  Eccentric connectivity index of composite graphs,
  {\em Util.\ Math.} {\bf 95} (2014) 3--22.

\bibitem{du-2013}
  Z.~Du, A.~Ili\'c,
  On AGX conjectures regarding average eccentricity,
  {\em MATCH Commun.\ Math.\ Comput.\ Chem.} {\bf 69} (2013) 597--609.

\bibitem{fath-2014}
  K.~Fathalikhani, H.~Faramarzi, H.~Yousefi-Azari,
  Total eccentricity of some graph operations,
  {\em Electron. Notes Discrete Math.} {\bf 45} (2014) 125--131.

\bibitem{gupta-2000}
  S.~Gupta, M.~Singh, A.K.~Madan,
  Connective eccentricity index: a novel topological descriptor for predicting biological activity,
  {\em J. Mol. Graph. Model.} {\bf 18} (2000) 18--25.

\bibitem{gutman-2017}
  I.~Gutman, S.~Li, W.~Wei,
  Cacti with {$n$}-vertices and {$t$} cycles having extremal Wiener index,
  {\em Discrete Appl.\ Math.} {\bf 232} (2017) 189--200.

\bibitem{gyori-2021}
  E.~Gy\H{o}ri, A.~Paulos, C.~Xiao, 
  Wiener index of quadrangulation graphs,
  {\em Discrete Appl.\ Math.} {\bf 289} (2021) 262--269.

\bibitem{he-2018}
  C.~He, S.~Li, J.~Tu,
  Edge-grafting transformations on the average eccentricity of graphs and their applications,
  {\em Discrete Appl.\ Math.} {\bf 238} (2018) 95--105.

\bibitem{hinz-2012}
  A.M.~Hinz, D.~Parisse,
  The average eccentricity of {S}ierpi\'nski graphs,
  {\em Graphs Combin.} {\bf 28} (2012) 671--686.

\bibitem{ilic-2012}
  A.~Ili\'c,
  On the extremal properties of the average eccentricity,
  {\em Comput.\ Math.\ Appl.} {\bf 64} (2012) 2877--2885.

\bibitem{iran-2019}
  M.A.~Iranmanesh, H.~Shabani, 
  The symmetry-moderated Wiener index of truncation graph, Thorn graph and caterpillars, 
  {\em Discrete Appl.\ Math.} {\bf 269} (2019) 41--51.

\bibitem{janakiraman-2008}
  T.N.~Janakiraman, M.~Bhanumathi, S.~Muthammai,
  Self-centered super graph of a graph and center number of a graph,
  {\em Ars Combin.} {\bf 87} (2008) 271--290.

\bibitem{knor-2018}
  M.~Knor, S.~Majstorovi\'c, R.~\v Skrekovski,
  Graphs whose {W}iener index does not change when a specific vertex is removed,
  {\em Discrete Appl.\ Math.} {\bf 238} (2018) 126--132.

\bibitem{knor-2016}
  M.~Knor, R.~\v{S}krekovski, A.~Tepeh,
  Mathematical aspects of Wiener index,
  {\em Ars Math.\ Contemp.} {\bf 11} (2016) 327--352.

\bibitem{krnc-2020}
  M.~Krnc, J.-S.~Sereni, R.~\v{S}krekovski, Z.B.~Yilma,
  Eccentricity of networks with structural constraints,
  {\em Discuss.\ Math.\ Graph Theory} {\bf 40} (2020) 1141--1162.

\bibitem{li-2011}
  D.~Li, B.~Wu, X.~Yang, X.~An,
  Nordhaus-{G}addum-type theorem for {W}iener index of graphs when decomposing into three parts,
  {\em Discrete Appl.\ Math.} {\bf 159} (2011) 1594--1600.

\bibitem{madan-2010}
  A.K.~Madan, H.~Dureja,
  Eccentricity based descriptors for QSAR/QSPR,
  in: {\em Novel Molecular Structure Descriptors - Theory and Applications II},
  I.~Gutman, B.~Furtula (Eds.),
  Univ.\ Kragujevac, Kragujevac, 2010, 91--138.

\bibitem{mao-2017}
  Y.~Mao, Z.~Wang, I.~Gutman, H.~Li,
  Nordhaus--{G}addum-type results for the {S}teiner {W}iener index of graphs,
  {\em Discrete Appl.\ Math.} {\bf 219} (2017) 167--175.

\bibitem{pan-2018}
  J.-J.~Pan, C.-H.~Tsai,
  A lower bound for the {$t$}-tone chromatic number of a graph in terms of Wiener index,
  {\em Graphs Combin.} {\bf 34} (2018) 159--162.

\bibitem{peterin-2018}
  I.~Peterin, P.~\v Zigert Pleter\v sek,
  Wiener index of strong product of graphs,
  {\em Opuscula Math.} {\bf 38} (2018) 81--94.

\bibitem{sharma-1997}
  V.~Sharma, R.~Goswami, A.K.~Madan,
  Eccentric connectivity index: a novel highly discriminating topological descriptor for structure property and structure activity studies,
  {\em J.\ Chem.\ Inf.\ Comput.\ Sci.} {\bf 37} (1997) 273--282.

\bibitem{tan-2018}
  S.W.~Tan,
  The minimum {W}iener index of unicyclic graphs with a fixed diameter,
  {\em J.\ Appl.\ Math.\ Comput.} {\bf 56} (2018) 93--114.

\bibitem{tang-2012}
  Y.~Tang, B.~Zhou,
  On average eccentricity,
  {\em MATCH Commun.\ Math.\ Comput.\ Chem.} {\bf 67} (2012) 405--423.

\bibitem{wiener-1947}
  H.~Wiener,
  Structural determination of paraffin boiling points,
  {\em J.\ Amer.\ Chem.\ Soc.} {\bf 69} (1947) 17--20.

\bibitem{xu-2016}
  K.~Xu, K.C.~Das, H.~Liu,
  Some extremal results on the connective eccentricity index of graphs,
  {\em J.\ Math.\ Anal.\ Appl.} {\bf 433} (2016) 803--817.

\bibitem{zhang-2019}
  M.~Zhang, S.~Li, B.~Xu, G.~Wang, 
  On the minimal eccentric connectivity indices of bipartite graphs with some given parameters,
  {\em Discrete Appl.\ Math.} {\bf 258} (2019) 242--253.
\end{thebibliography}
\end{document}